\documentclass[12pt]{amsart}
\usepackage[dvips]{graphicx}
\usepackage{amssymb,amsmath,amsthm}

\textwidth=15cm
\textheight=22cm
\topmargin=0.5cm
\oddsidemargin=0.5cm
\evensidemargin=0.5cm
\pagestyle{plain}
%
%

\def\bold{\bf}

\def\0b{{\bold 0}}
\usepackage{bm}
\bmdefine{\Bzero}{0}
\bmdefine{\Bone}{1}
\def\Bone{{\bf 1}}

\def\ZZ{{\mathbb Z}}

\def\CC{{\mathbb C}}
\newtheorem{Theorem}{Theorem}[section]
\newtheorem{Lemma}[Theorem]{Lemma}

\newtheorem{Remark}[Theorem]{Remark}

\newtheorem{Conjecture}[Theorem]{Conjecture}

\begin{document}
\title{Smooth Fano polytopes whose Ehrhart polynomial has a root with large real part}
\author{Hidefumi Ohsugi and Kazuki Shibata}
\thanks{
The authors are grateful to Takayuki Hibi, Akihiro Higashitani and
Tetsushi Matsui for useful discussions.
This research was supported by 
JST CREST
}
\date{}
\address{Hidefumi Ohsugi,
Department of Mathematics,
College of Science,
Rikkyo University,
Toshima-ku, Tokyo 171-8501, Japan.
e-mail address:{\tt ohsugi@rikkyo.ac.jp}
}
\address{Kazuki Shibata,
Department of Mathematics,
College of Science,
Rikkyo University,
Toshima-ku, Tokyo 171-8501, Japan.
e-mail address:{\tt 10lc002r@rikkyo.ac.jp}
}

\begin{abstract}
The symmetric edge polytopes of odd cycles
(del Pezzo polytopes)
are known as smooth Fano polytopes.
In this paper, we show that if the length of the cycle is 127,
then the Ehrhart polynomial has a root whose real part is greater than the dimension.
As a result,
we have a smooth Fano polytope that is
a counterexample to the two conjectures on 
the roots of Ehrhart polynomials.
\end{abstract}

\maketitle

\section*{Introduction}

Let $d \geq 3$ be an integer and $A_d$,
the $(d+1) \times (2d +1)$ matrix
$$
A_d = 
\left(
\begin{array}{c|cccc|cccc}
0 & 1 &  &  & -1 & -1   &  &  & 1 \\
0 & -1 & \ddots  &  &  & 1 & \ddots   &  &  \\
\vdots         &     & \ddots & 1 &  &    & \ddots & -1 &  \\
0 &   &  & -1 & 1 &  &  &  1 & -1 \\
\hline
1 & 1 & \cdots & 1 & 1 & 1 & \cdots & 1 & 1 \\
\end{array}
\right)
.$$
In the present paper, we study
the convex hull ${\rm Conv}(A_d)$ of $A_d$.
The matrix $A_d$ is the {\em centrally symmetric configuration} \cite{central}
and ${\rm Conv}(A_d)$ is called the {\em symmetric edge polytope} of the cycle
of length $d$.
From the results in \cite{Higa1, five},
we have

\medskip

\noindent
{\bf Proposition.}
The polytope $ {\rm Conv}(A_d)$ is a Gorenstein Fano polytope
 (reflexive polytopes)
of dimension $d - 1$.
In addition, $ {\rm Conv}(A_d)$ is a smooth Fano polytope
if and only if $d$ is odd.

\medskip

Here, we first construct  
the reduced Gr\"obner basis ${\mathcal G}$ of $I_{A_d}$.
Next, using ${\mathcal G}$, we compute the Ehrhart polynomial 
and the $h$-vector
of ${\rm Conv}(A_d)$.
Finally, we study the roots of 
the Ehrhart polynomial when $d$ is odd.
We show that the Ehrhart polynomial of ${\rm Conv}(A_{127})$ has a root 
whose real part is greater than $\dim ( {\rm Conv}(A_{127})  )$.
This is a counterexample to the conjectures given in \cite{BDDPS2005, five}.

\section{Gr\"obner bases of toric ideals}

Let ${\mathcal R}_d 
= K[t_1, t_1^{-1}, \ldots, t_{d}, t_{d}^{-1},s]$ be
the Laurent polynomial ring over a field $K$
and
let $K[z,X,Y]=K[z,x_1,\ldots, x_d , y_1 , \ldots , y_d]$ be the polynomial ring
over $K$.
We define the 
ring homomorphism $\pi :K[z,X,Y] \rightarrow {\mathcal R}_d$
by setting $\pi(x_i) = t_i t_{i+1}^{-1} s$, $\pi(y_i) = t_i^{-1} t_{i+1} s$ 
for $1 \leq i \leq d$ (here we set $t_{d+1}=t_1$) and $\pi(z) = s$. 
The {\em toric ideal} $I_{A_d}$ is $\ker (\pi)$.
Let $<$ be the reverse lexicographic order on $K[z,X,Y]$ with the ordering
$
z < y_d < x_d < \cdots < y_1< x_1
.$
For $d \geq 3$, let $[d] = \{1,\ldots,d\}$ and $k = \lceil \frac{d}{2} \rceil$.

\begin{Theorem}
\label{oddGB}
The reduced Gr\"obner basis of $I_{A_d}$ with respect to $<$
consists of
\begin{eqnarray}
x_i y_i - z^2 & (1 \leq i \leq d) \\
\prod_{l=1}^{k} x_{i_l} - z \prod_{l=1}^{k-1} y_{j_l}
&
\left(
[d] = \{ i_{1} ,\ldots , i_{k}  \} \cup 
\{ j_{1} , \ldots , j_{k-{1}} \}
\right)\\
\prod_{l=1}^{k} y_{j_{l}} - z \prod_{l=1}^{k-1} x_{i_{l}}
&
\left(
[d] = \{ i_{1} ,\ldots , i_{k-{1}}  \} \cup 
\{ j_{1} , \ldots , j_{k} \}
\right)
\end{eqnarray}
if $d$ is odd and
\begin{eqnarray}
x_i y_i - z^2 & (1 \leq i \leq d) \\
\prod_{l=1}^{k} x_{i_l} - y_d \prod_{l=1}^{k-1} y_{j_l}
&
\left(
[d-1] = \{ i_{1} ,\ldots , i_{k}  \} \cup 
\{ j_{1} , \ldots , j_{k-{1}} \}
\right)\\
\prod_{l=1}^{k} y_{j_{l}} - x_d \prod_{l=1}^{k-1} x_{i_{l}}
&
\left(
[d-1] = \{ i_{1} ,\ldots , i_{k-{1}}  \} \cup 
\{ j_{1} , \ldots , j_{k} \}
\right)
\end{eqnarray}
if $d$ is even.
The initial monomial of each binomial is the first monomial.
\end{Theorem}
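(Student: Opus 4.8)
The plan is to prove the stronger statement that the listed set $\mathcal{G}$ is a Gr\"obner basis by reading off the initial monomials and then verifying the standard-monomial criterion for toric ideals. Since $K[z,X,Y]/I_{A_d}\cong\pi(K[z,X,Y])$ and distinct Laurent monomials of $\mathcal{R}_d$ are $K$-linearly independent, a finite set of binomials in $I_{A_d}$ with prescribed leading terms is a Gr\"obner basis if and only if $\pi$ maps the monomials lying outside the ideal generated by those leading terms (the \emph{standard monomials}) to pairwise distinct Laurent monomials: the residues of the standard monomials always span the quotient, so their forming a basis is equivalent to their images being distinct.

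First I would dispatch the two routine points. Each binomial lies in $\ker\pi$: for $x_iy_i-z^2$ one computes $\pi(x_iy_i)=s^2=\pi(z^2)$, and for the binomials in families (2) and (3) the hypothesis that the two index sets partition $[d]$, together with the telescoping identity $\prod_{i=1}^{d}\bigl(t_it_{i+1}^{-1}\bigr)=1$ (indices read cyclically, $t_{d+1}=t_1$), forces the two monomials to have equal image. For the initial terms, observe that $z$ is the smallest variable for $<$; in each of (1)--(3) the first monomial is free of $z$ while the second is divisible by $z$, and the two monomials share the same total degree, so the reverse lexicographic rule selects the first monomial. (In the even case the analogous role is played by $y_d$, respectively $x_d$, and the same comparison applies.)

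For odd $d$ a standard monomial has the form $m=z^{a}\prod_{i\in S}x_i^{b_i}\prod_{i\in T}y_i^{c_i}$ with $b_i,c_i\ge 1$, where avoiding the leading terms $x_iy_i$ forces $S\cap T=\emptyset$, and avoiding the degree-$k$ leading terms forces $\#S\le k-1$ and $\#T\le k-1$. Encode $m$ by the vector $e\in\ZZ^{d}$ with $e_i=b_i$ for $i\in S$, $e_i=-c_i$ for $i\in T$, and $e_i=0$ otherwise. A direct computation shows that the exponent of $t_r$ in $\pi(m)$ is $e_r-e_{r-1}$ (cyclically), while the exponent of $s$ is $\deg m=a+\|e\|_1$. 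Hence $\pi(m)$ determines both $\deg m$ and the cyclic difference vector $(e_r-e_{r-1})_r$, and the latter determines $e$ only up to adding a constant vector. Thus if $m,m'$ are standard monomials with $\pi(m)=\pi(m')$, then $\deg m=\deg m'$ and $e'=e+c\,\mathbf{1}$ for some $c\in\ZZ$; it then suffices to prove $c=0$, for this yields $e=e'$ and $a=\deg m-\|e\|_1=a'$, whence $m=m'$.

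The crux --- and the step I expect to require the most care --- is showing that the support constraints pin down the constant $c$. The set of integers $c'$ for which $e+c'\mathbf{1}$ satisfies both $\#\{i:e_i+c'>0\}\le k-1$ and $\#\{i:e_i+c'<0\}\le k-1$ is an interval, being the intersection of a down-set and an up-set. It cannot contain two consecutive integers: if some $\tilde e$ and $\tilde e+\mathbf{1}$ both satisfied the constraints, then using $\#\{i:\tilde e_i\ge0\}=\#\{i:\tilde e_i+1>0\}\le k-1$ together with $\#\{i:\tilde e_i<0\}\le k-1$ and the identity $d=2k-1$ we would obtain $d=\#\{i:\tilde e_i\ge0\}+\#\{i:\tilde e_i<0\}\le(k-1)+(k-1)=d-1$, a contradiction. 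Hence the interval contains at most one integer; since both $c'=0$ and $c'=c$ lie in it, $c=0$, completing the injectivity argument and therefore the proof. The even case is handled in the same way, the only change being bookkeeping around the distinguished variables $x_d,y_d$ in the encoding and in the count $d=2k$.
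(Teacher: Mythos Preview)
Your argument is correct, but it proceeds along a genuinely different line from the paper's. The paper argues by contradiction: assuming $\mathcal{G}$ is not a Gr\"obner basis, it picks an irreducible binomial $u-v\in I_{A_d}$ with both $u,v$ standard, and then \emph{reduces to a smaller toric ideal}: replacing each $y_j^{q_j}$ by $z^{2q_j}/x_j^{q_j}$ (in effect using $x_jy_j=z^2$) converts $u-v$ into a binomial $u'-v'\in K[z,X]$ that lies in the toric ideal $I_B$ of the one-sided configuration, where $I_B=\langle x_1\cdots x_d-z^d\rangle$. The size bounds $|\mathcal{I}|,|\mathcal{J}|,|\mathcal{I}'|,|\mathcal{J}'|\le k-1$ prevent $x_1\cdots x_d$ from dividing either $u'$ or $v'$, forcing $u'=v'$; the various disjointness conditions then collapse everything to a power of $z$, a contradiction. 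Your route instead proves injectivity of $\pi$ on standard monomials directly: you encode a standard monomial by the signed exponent vector $e$, observe that $\pi$ determines $e$ up to a global shift $c\mathbf{1}$, and then use the pigeonhole identity $d=\#\{e_i\ge 0\}+\#\{e_i<0\}$ together with the bound $k-1$ on both the positive and negative supports to pin down $c=0$. The paper's method is modular---it isolates the ``symmetric'' part of the problem and feeds it into the known structure of $I_B$, a pattern that generalizes to other centrally symmetric configurations---whereas your argument is fully self-contained and makes the role of the parity $d=2k-1$ explicit in a single counting step. You should mention that reducedness is immediate (the non-leading terms all involve $z$, resp.\ $x_d$ or $y_d$, while the leading terms do not, and no leading term divides another); the paper dismisses this in one line as well.
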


\begin{proof}
Let ${\mathcal G}$ be the set of all binomials above.
It is easy to see that ${\mathcal G} \subset I_{A_d}$ and that
the initial monomial of each binomial in ${\mathcal G}$ is the first monomial.
Let ${\rm in} ({\mathcal G}) =
\langle {\rm in}_<(g) \ | \ g \in {\mathcal G} \rangle$.
Suppose that $d$ is odd and that
${\mathcal G}$ is not a Gr\"obner basis of $I_{A_d}$.
Then, there exists an irreducible binomial $(0 \neq) \ f =  u - v \in I_{A_d}$
such that
neither $u$ nor $v$ belongs to ${\rm in} ({\mathcal G})$.
Let
$
u = z^{\alpha}  \prod_{l=1}^{m} x^{p_{l}}_{i_{l}}   \prod_{l=1}^{n} y^{q_{l}}_{j_{l}}$
and
$v = z^{\alpha'} \prod_{l=1}^{m'} x^{p'_l}_{i'_l}   \prod_{l=1}^{n'} y^{q'_l}_{j'_l}  
$,
where
$0 < p_l , q_l, p'_l , q'_l \in \ZZ$
for all $l$
and
$
{\mathcal I} = \{ i_1 , \ldots , i_{m} \}$, 
${\mathcal J} = \{ j_1 , \ldots , j_{n} \}$, 
${\mathcal I}' = \{ i'_1 , \ldots , i'_{m'} \}$,
${\mathcal J}' = \{ j'_1 , \ldots , j'_{n'} \}$
are subsets of $[d]$
with the cardinality $m$, $n$, $m'$, and $n$, respectively.
Since neither $u$ nor $v$ is divided by $x_i y_i$,
we have
${\mathcal I} \cap {\mathcal J} = {\mathcal I}' \cap {\mathcal J}' = \emptyset.$
In addition,  since neither $u$ nor $v$ is divided by
the initial monomials of binomials (2) and (3),
it follows that
$m,n, m',n' \leq k-1$.
Moreover, since $f$ is irreducible, 
we have 
${\mathcal I} \cap {\mathcal I}' = {\mathcal J} \cap {\mathcal J}' = \emptyset$.
Let $p = \sum_{l=1}^{m} p_{l}$, $q=\sum_{l=1}^{n} q_{l}$, 
$p'=\sum_{l=1}^{m'} p'_l$, $q'=\sum_{l=1}^{n'} q'_l$.
Then,
$
\pi(u) = s^{\alpha + p + q}
\prod_{l=1}^{m} (t_{i_l} t^{-1}_{i_l +1})^{p_l}
\prod_{l=1}^{n} (t^{-1}_{j_l} t_{j_l +1})^{q_l}
$,
$
\pi(v) = s^{\alpha' + p' + q'}
\prod_{l=1}^{m'} (t_{i'_l} t^{-1}_{i'_l +1})^{p'_l}
\prod_{l=1}^{n'} (t^{-1}_{j'_l} t_{j'_l +1})^{q'_l}
$,
where we set $t_{d+1} =t_1$.
Since $\pi(u) = \pi(v)$, it follows that
$\pi(u')=\pi(v')$, where
$
u'=
z^{\alpha+2q}  \prod_{l=1}^{m} x^{p_{l}}_{i_{l}} \prod_{l=1}^{n'} x^{q'_l}_{j'_l}$
and
$
v'=
z^{\alpha'+2 q'} \prod_{l=1}^{m'} x^{p'_l}_{i'_l}     \prod_{l=1}^{n} x^{q_{l}}_{j_{l}}.
$
Thus, $g= u'-v'$ belongs to $I_{A_d}$.
Since $g$ belongs to $K[z,X]$, $g$ belongs to the toric ideal
$I_B$, where
$B$ is the matrix
consisting of the first $d+1$ columns of $A_d$.
In addition, by virtue of $m,n, m',n' \leq k-1$,
we have 
$
| {\mathcal I} \cup {\mathcal J}' |  \leq  2(k -1) < d
$,
$
| {\mathcal I}' \cup {\mathcal J} | \leq  2(k -1) < d.
$
Thus, neither $u'$ nor $v'$ is divided by $x_1 \cdots x_d$.
Since $g \in I_B = \left< x_1 \cdots x_d - z^d \right>$, 
we have $g = 0$, that is, $u' = v'$.
Then, from ${\mathcal I} \cap {\mathcal J} = {\mathcal I}' \cap {\mathcal J}' =
{\mathcal I} \cap {\mathcal I}' = {\mathcal J} \cap {\mathcal J}' = \emptyset$,
we have
$
({\mathcal I} \cup {\mathcal J}') \cap ({\mathcal I}' \cup {\mathcal J}) = \emptyset
.$
Hence,
$\prod_{l=1}^{m} x^{p_{l}}_{i_{l}} \prod_{l=1}^{n'} x^{q'_l}_{j'_l}$ and 
$\prod_{l=1}^{m'} x^{p'_l}_{i'_l}     \prod_{l=1}^{n} x^{q_{l}}_{j_{l}}$ have
no common variables.
Since $u' = v'$, we have
$m=n=m'=n' =0$.
Hence,
$
u = z^{\alpha}
$
and
$
v = z^{\alpha'} 
$.
Since $f$ is a homogeneous binomial,
this is a contradicition.
Thus, ${\mathcal G}$ is a Gr\"obner basis of $I_{A_d}$.
It is trivial that ${\mathcal G}$ is reduced.
The case when $d$ is even is analyzed by a similar argument.
\end{proof}

\section{Ehrhart polynomials and roots}

For $0 \leq i \leq d$, 
let $r_d(i)$
denote the number of squarefree monomials in $K[X,Y]$
of degree $i$
that do not belong to the initial ideal ${\rm in}_{<} (I_{A_d})$
and
let $s_d(i+1)$
denote the number of squarefree monomials in $K[z,X,Y]$
of degree $(i+1)$
that are divided by $z$ and 
do not belong to ${\rm in}_{<} (I_{A_d})$.
For example, $r_d(0) = s_d(1) =1$ and $r_d(d) = 0$.

\begin{Lemma}
\label{standard}
For $0 \leq i  \leq  d-1$, we have
$r_d(i) = {d \choose i}
\sum_{\ell=1}^{d-i}
{i \choose k - \ell}$ and
$s_d(i+1) = r_d(i).$
In particular, 
$r_d(i) = {d \choose i} 2^i$
for
$0 \leq i \leq k-1.$
\end{Lemma}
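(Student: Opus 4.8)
The plan is to compute the initial ideal $\mathrm{in}_<(I_{A_d})$ directly from Theorem \ref{oddGB} and then to count its standard squarefree monomials combinatorially. Suppose first that $d$ is odd, so $d = 2k-1$. Reading off the leading terms in Theorem \ref{oddGB}, the ideal $\mathrm{in}_<(I_{A_d})$ is generated by the squarefree monomials $x_i y_i$ $(1 \le i \le d)$, by every product $x_{i_1}\cdots x_{i_k}$ of $k$ distinct variables from $X$ (as $\{i_1,\dots,i_k\}$ ranges over all $k$-subsets of $[d]$), and by every product $y_{j_1}\cdots y_{j_k}$ of $k$ distinct variables from $Y$. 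In particular $\mathrm{in}_<(I_{A_d})$ is a squarefree (Stanley--Reisner) monomial ideal, and none of its minimal generators involves $z$; this last observation is what will make the passage from $r_d$ to $s_d$ automatic.

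First I would set up a bijection. A squarefree monomial of $K[X,Y]$ has the form $\prod_{i\in S} x_i \prod_{j \in T} y_j$ for subsets $S, T \subseteq [d]$, and it avoids $\mathrm{in}_<(I_{A_d})$ exactly when it is divisible by none of the generators above, i.e. when $S \cap T = \emptyset$ (avoiding $x_iy_i$), $|S| \le k-1$ (avoiding the $x$-products), and $|T| \le k-1$ (avoiding the $y$-products). Hence $r_d(i)$ counts ordered pairs $(S,T)$ of disjoint subsets of $[d]$ with $|S|,|T| \le k-1$ and $|S|+|T| = i$. Writing $a=|S|$ and $b=i-a$, choosing $S$ and then $T$ among the remaining indices gives $\binom{d}{a}\binom{d-a}{b}$ such pairs, and the identity $\binom{d}{a}\binom{d-a}{b} = \binom{d}{i}\binom{i}{a}$ factors out $\binom{d}{i}$. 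Summing over the admissible $a$ and substituting $\ell = k-a$ turns $\sum_a \binom{i}{a}$ into $\sum_{\ell=1}^{d-i}\binom{i}{k-\ell}$: the endpoints match because $d=2k-1$ forces $a=k-1$ at $\ell=1$ and $a=i-k+1$ at $\ell=d-i$, while all terms with $a<0$ or $a>i$ contribute $0$. This is precisely the stated formula for $r_d(i)$.

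Next I would dispose of $s_d(i+1)$ and the special range. Since no minimal generator of $\mathrm{in}_<(I_{A_d})$ involves $z$, a squarefree monomial $zM$ (with $M$ squarefree in $K[X,Y]$) lies in the initial ideal if and only if $M$ does; thus multiplication by $z$ is a degree-raising bijection from the standard squarefree monomials of $K[X,Y]$ of degree $i$ onto the $z$-divisible standard squarefree monomials of degree $i+1$, giving $s_d(i+1) = r_d(i)$. For the final assertion, when $0 \le i \le k-1$ the constraints $|S|,|T| \le k-1$ are vacuous (each of $|S|,|T|$ is at most $i$), so the count reduces to choosing the $i$ used indices and labelling each as an $x$ or a $y$, namely $\binom{d}{i}2^i$; equivalently $\sum_{a=0}^{i}\binom{i}{a}=2^i$.

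Finally, the even case. When $d$ is even, $k=d/2$ and Theorem \ref{oddGB} shows that the $x$- and $y$-products generating $\mathrm{in}_<(I_{A_d})$ use only indices in $[d-1]$, a set of $2k-1$ elements, so the constraints become $S\cap T=\emptyset$, $|S\cap[d-1]|\le k-1$, and $|T\cap[d-1]|\le k-1$. Splitting the count according to whether the extra index $d$ lies in $S$, in $T$, or in neither reduces everything to the odd-type tally on $[d-1]$, after which the same binomial manipulations, now combined with Pascal's identity $\binom{d-1}{i}+\binom{d-1}{i-1}=\binom{d}{i}$, reproduce the stated formula; the bijection $s_d(i+1)=r_d(i)$ and the $\binom{d}{i}2^i$ evaluation go through verbatim. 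The only real work anywhere is this binomial bookkeeping, namely pinning the summation to the window $1\le \ell\le d-i$ and reconciling the two parities, rather than any new idea: the conceptual step is the identification of the initial ideal from Theorem \ref{oddGB}, and I expect the matching of the summation range (together with the even-case reduction) to be the single point demanding care.
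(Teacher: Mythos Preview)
Your proposal is correct and follows essentially the same approach as the paper: identify the squarefree initial ideal from Theorem~\ref{oddGB}, observe that $z$ is absent from the generators to get $s_d(i+1)=r_d(i)$, and count disjoint pairs $(S,T)$ via the multinomial identity $\binom{d}{a}\binom{d-a}{i-a}=\binom{d}{i}\binom{i}{a}$ followed by the substitution $\ell=k-a$. The only difference is that you spell out the even case (splitting on the location of the index $d$ and invoking Pascal), whereas the paper simply declares it ``similar''; your reduction does go through, so nothing is missing.
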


\begin{proof}
Since the variable $z$ does not appear in the initial monomials 
of the binomials in Theorem \ref{oddGB},
$
u \notin {\rm in}_{<} (I_{A_d})
$
if and only if
$z \ u 
\notin {\rm in}_{<} (I_{A_d}) 
$
for any squarefree monomial $u \in K[X,Y]$.
Thus, 
$s_d(i+1) = r_d(i)$ for $0 \leq i  \leq  d-1$.
Suppose that $d$ is odd.
Then, from Theorem \ref{oddGB},  $r_d(i)$ is the number of monomials
$
\prod_{i \in {\mathcal I}} x_i
\prod_{j \in {\mathcal J}} y_j
$
where
${\mathcal I},{\mathcal J} \subset [d]$,
${\mathcal I} \cap {\mathcal J} = \emptyset$,
$|{\mathcal I} \cup {\mathcal J}| = i$
and $|{\mathcal I}|, |{\mathcal J}| \leq k-1$.
Since the number of subsets ${\mathcal I}$, ${\mathcal J} \subset [d]$ such that
${\mathcal I} \cap {\mathcal J} = \emptyset$,
$|{\mathcal I} \cup {\mathcal J}| = i$
and $|{\mathcal I}| = \lambda $ is
$
{d \choose \lambda , i - \lambda , d-i}
= 
{d \choose d-i}
{i \choose  \lambda}
= 
{d \choose i}
{i \choose  \lambda}
,$
it follows that
$
r_d(i) = 
\sum_{\ell=1}^{d-i}
{d \choose i}
{i \choose k - \ell}
=
{d \choose i}
\sum_{\ell=1}^{d-i}
{i \choose k - \ell}
$
for $0 \leq i  \leq  d-1$.
If $d$ is even, then the proof is similar.
\end{proof}

It is known \cite[Chapter 8]{Stu} that
${\rm in}_< (I_{A_d}) = \sqrt{{\rm in}_< (I_{A_d})}$ is the Stanley--Reisner ideal of
a regular unimodular triangulation $\Delta$ of ${\rm Conv}(A_d)$.
Thus, $r_d(i) + r_d(i+1)$ is the number of $i$-dimensional faces of $\Delta$.
From Lemma \ref{standard} and
\cite[Theorem 1.4]{Stanley},
 the Hilbert polynomial of $K[ z, X,Y] / I_{A_d}$ can be computed as follows:

\begin{Theorem}
\label{Ehrhart}
The Ehrhart polynomial of ${\rm Conv}(A_d)$ is
$
\sum_{i=0}^{d-1}
r_d(i)
{m \choose i}.
$
Moreover,
the normalized volume of  ${\rm Conv}(A_d)$ equals
$
k
{d \choose k}
.$
\end{Theorem}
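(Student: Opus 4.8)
The plan is to identify the Ehrhart polynomial of ${\rm Conv}(A_d)$ with the Hilbert polynomial of the toric ring $K[z,X,Y]/I_{A_d}$, and then to read off the latter from the face numbers of the triangulation $\Delta$. Since the bottom row of $A_d$ is the all-ones vector, its columns lie at height one, so $K[z,X,Y]/I_{A_d}$ is the semigroup ring of these columns and is generated in degree one; because ${\rm in}_<(I_{A_d})$ is, by \cite[Chapter 8]{Stu}, the Stanley--Reisner ideal of a \emph{unimodular} triangulation $\Delta$, the polytope has the integer decomposition property, and hence the Hilbert function in degree $m$ counts the lattice points of $m\cdot{\rm Conv}(A_d)$, i.e. it equals the Ehrhart function. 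As passing to the initial ideal preserves the Hilbert function, it suffices to count standard monomials, equivalently to compute the Hilbert function of $K[\Delta]$.

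Next I would invoke \cite[Theorem 1.4]{Stanley}: for the Stanley--Reisner ring of $\Delta$ the Hilbert function in degree $m$ is $\sum_{i\ge 0} f_{i-1}\binom{m-1}{i-1}$, where $f_{i-1}$ is the number of faces of $\Delta$ with exactly $i$ vertices. A squarefree standard monomial of degree $i$ in $K[z,X,Y]$ is either divisible by $z$ or not; by Lemma \ref{standard} the two counts are $s_d(i)=r_d(i-1)$ and $r_d(i)$, so $f_{i-1}=r_d(i-1)+r_d(i)$ (with $r_d(-1)=0$). Substituting and regrouping with Pascal's rule,
$$
\sum_{i\ge 0}\bigl(r_d(i-1)+r_d(i)\bigr)\binom{m-1}{i-1}
=\sum_{i\ge 0} r_d(i)\left[\binom{m-1}{i-1}+\binom{m-1}{i}\right]
=\sum_{i\ge 0} r_d(i)\binom{m}{i}.
$$
Since the factor $\binom{d}{i}$ in Lemma \ref{standard} together with the empty range $\sum_{\ell=1}^{d-i}$ forces $r_d(i)=0$ for $i\ge d$, the sum truncates at $i=d-1$, yielding the stated Ehrhart polynomial.

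For the normalized volume I would extract the leading coefficient. As ${\rm Conv}(A_d)$ has dimension $d-1$, the normalized volume is $(d-1)!$ times the coefficient of $m^{d-1}$, and only the top term $r_d(d-1)\binom{m}{d-1}$ contributes; hence the normalized volume equals $r_d(d-1)$. By Lemma \ref{standard}, $r_d(d-1)=\binom{d}{d-1}\binom{d-1}{k-1}=d\binom{d-1}{k-1}$, and the elementary identity $d\binom{d-1}{k-1}=k\binom{d}{k}$ finishes the computation. Equivalently, $r_d(d-1)=f_{d-1}$ is the number of maximal simplices of the unimodular triangulation $\Delta$, which is again the normalized volume.

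The routine manipulations—the Pascal reindexing and the final binomial identity—present no difficulty. The one point deserving care is the identification in the first paragraph of the Hilbert polynomial with the Ehrhart polynomial: one must check that normality, guaranteed by the unimodular triangulation, makes $K[z,X,Y]/I_{A_d}$ the full coordinate ring of ${\rm Conv}(A_d)$ rather than a proper subring, so that the two polynomials genuinely coincide.
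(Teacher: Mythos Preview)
Your proof is correct and follows essentially the same route as the paper: the unimodular triangulation $\Delta$ from \cite[Chapter~8]{Stu}, the face count $f_{i-1}=r_d(i-1)+r_d(i)$ via Lemma~\ref{standard}, and Stanley's formula \cite[Theorem~1.4]{Stanley} for the Hilbert function, followed by the Pascal regrouping and the identity $d\binom{d-1}{k-1}=k\binom{d}{k}$ for the volume. The paper compresses all of this into the single sentence preceding Theorem~\ref{Ehrhart}; your version simply spells out the Pascal step and the Ehrhart/Hilbert identification explicitly.
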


Let $(h_0^{(d)},h_1^{(d)},\ldots,h_{d-1}^{(d)})$ be the $h$-vector of ${\rm Conv}(A_d)$.
Note that $h_{0}^{(d)} = 1 $.
Since ${\rm Conv}(A_d)$ is Gorenstein, we have $h_j^{(d)} = h_{d-1-j}^{(d)} $
for each $0 \leq j \leq d-1$.
Thus, it is enough to study $h_j^{(d)}$ for $1 \leq j \leq k-1$.

\begin{Theorem}
\label{hvector}
For $1 \leq j  \leq k-1$, we have
$$
h_{j}^{(d)} 
=
(-1)^j
\sum_{i=0}^{j}
(- 2)^i
{d \choose i}
{d - i -1 \choose j-i}
=
\left\{
\begin{array}{cc}
2^{d-1} & j = k-1 \mbox{ and } d \mbox{ is odd,}\\
h_{j}^{(d-1)} + h_{j-1}^{(d-1)} & \mbox{ otherwise.}
\end{array}
\right.
$$
\end{Theorem}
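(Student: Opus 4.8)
The plan is to obtain the two displayed equalities separately, reducing each to a binomial identity. For the \textbf{first equality} I would pass from the Ehrhart polynomial of Theorem~\ref{Ehrhart} to the $h$-vector, which is the numerator of the Ehrhart series. Using $\sum_{m\ge 0}\binom{m}{i}t^{m}=t^{i}/(1-t)^{i+1}$ gives
$$
\sum_{m\ge 0}\Bigl(\sum_{i=0}^{d-1}r_d(i)\binom{m}{i}\Bigr)t^{m}
=\frac{1}{(1-t)^{d}}\sum_{i=0}^{d-1}r_d(i)\,t^{i}(1-t)^{d-1-i},
$$
so that $h_j^{(d)}=\sum_{i=0}^{j}r_d(i)(-1)^{j-i}\binom{d-1-i}{j-i}$. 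For $1\le j\le k-1$ every index in this sum satisfies $i\le j\le k-1$, so Lemma~\ref{standard} lets me replace $r_d(i)$ by $\binom{d}{i}2^{i}$; collecting the signs produces exactly the alternating sum.

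For the \textbf{second equality} I would introduce, for all $j\ge 0$, the quantity $g(d,j):=(-1)^{j}\sum_{i=0}^{j}(-2)^{i}\binom{d}{i}\binom{d-1-i}{j-i}$, which coincides with $h_j^{(d)}$ precisely when $1\le j\le k-1$. Its generating function collapses:
$$
\sum_{j\ge 0}g(d,j)\,x^{j}
=(1-x)^{d-1}\sum_{i\ge 0}\binom{d}{i}\left(\frac{2x}{1-x}\right)^{i}
=\frac{(1+x)^{d}}{1-x},
$$
whence $g(d,j)=\sum_{\ell=0}^{j}\binom{d}{\ell}$. In particular Pascal's rule $\binom{d}{\ell}=\binom{d-1}{\ell}+\binom{d-1}{\ell-1}$ yields the recursion $g(d,j)=g(d-1,j)+g(d-1,j-1)$ for \emph{every} $d$ and $j$. (One can instead verify this recursion straight from the alternating sum by applying Pascal's rule to $\binom{d}{i}$ and to $\binom{d-1-i}{j-i}$ and reindexing; that is the only genuine computation.)

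It then remains to transfer this universal recursion to the $h_j$ and to locate the one place it fails. Put $k'=\lceil(d-1)/2\rceil$; then $k'=k$ if $d$ is even and $k'=k-1$ if $d$ is odd, and $g(d',j)=h_j^{(d')}$ holds precisely for $j\le k'-1$, since that is the range in which $r_{d'}(i)=\binom{d'}{i}2^{i}$ is available for all $i\le j$. A short comparison of ranges shows that in every ``otherwise'' case both $g(d-1,j)$ and $g(d-1,j-1)$ already equal genuine $h$-values, so the recursion descends to $h_j^{(d)}=h_j^{(d-1)}+h_{j-1}^{(d-1)}$. The sole exception is $d$ odd with $j=k-1=k'$: there $g(d-1,k-1)$ sits one step outside the admissible range for dimension $d-1$, which is exactly why the recursion breaks. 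For that middle term I would evaluate the closed form directly: with $d$ odd, symmetry of the binomial coefficients gives $h_{k-1}^{(d)}=\sum_{\ell=0}^{(d-1)/2}\binom{d}{\ell}=\tfrac12\,2^{d}=2^{d-1}$.

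The only nontrivial step is the evaluation of the generating function (equivalently, the recursion for $g$); everything else is bookkeeping. The genuine subtlety, and the reason the statement is a case distinction rather than a single clean recursion, is tracking how far the alternating-sum formula keeps computing the true $h_j$: up to index $k-1$ in dimension $d$, but only up to $k'-1$ in dimension $d-1$. The off-by-one when $d$ is odd is precisely what forces the exceptional middle value $2^{d-1}$.
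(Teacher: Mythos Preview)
Your proof is correct and follows essentially the same route as the paper: the first equality via the standard $f$-to-$h$ conversion applied to Lemma~\ref{standard}, and the second via a generating-function identity for the alternating sum. The paper's sketch is terser---it records the intermediate form ``$h_j^{(d)}$ is the coefficient of $u^j$ in $2^d(u+1)^d(u+2)^{-d+j}$'' and then cites an identity in Stanley for the recursion---whereas you make the computation explicit, obtaining the cleaner closed form $h_j^{(d)}=\sum_{\ell=0}^{j}\binom{d}{\ell}$ from the generating function $(1+x)^d/(1-x)$ and deducing both the Pascal-type recursion and the middle value $2^{d-1}$ directly from it; your careful range-tracking (why the recursion fails exactly at $j=k-1$ for odd $d$) is a point the paper leaves entirely implicit.
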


\begin{proof}
By
Lemma \ref{standard} and a well-known expression
(\cite[p. 58]{Stanley}), one can show
the first equality and that
$
h_{j}^{(d)} 
$ 
 is the coefficient of $u^j$ in the expansion of 
$
2^d (u+1)^d (u+2)^{-d+j}
$.
The second equality follows from 
this fact and the identity
given in \cite[p.148]{Stanley2}.
\end{proof}

\bigskip

Finally, we study the roots of the Ehrhart polynomial
 when $d$ is odd.
In this case, ${\rm Conv}(A_d)$ is a smooth Fano polytope of dimension $d-1$.
Since ${\rm Conv}(A_d)$ is a Gorenstein Fano polytope,
the roots of the Ehrhart polynomial
 are symmetrically distributed in the complex plane with respect to
the line ${\rm Re} (z) =  -1/2$.
Here, ${\rm Re}(z)$ is the real part of $z \in \CC$.
The following conjectures are given in \cite{BDDPS2005, five}:

\begin{Conjecture}[\cite{BDDPS2005}]
\label{conj:dstrip}
{\em
All roots $\alpha $ of Ehrhart polynomials of $D$ dimensional lattice polytopes
satisfy $-D \le {\rm Re} (\alpha )  \le D - 1$.
}
\end{Conjecture}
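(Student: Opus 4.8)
Strictly speaking, Conjecture \ref{conj:dstrip} is a universal assertion about every $D$-dimensional lattice polytope, so there is nothing to \emph{prove} here in the usual sense; as announced in the abstract, the object of this paper is to \emph{refute} it. My plan is therefore to produce a single explicit counterexample drawn from the family ${\rm Conv}(A_d)$, for which all the needed data have already been assembled. By the Proposition stated in the Introduction, ${\rm Conv}(A_d)$ with $d$ odd is a smooth Fano polytope of dimension $D = d-1$, so the conjecture predicts that every root $\alpha$ of its Ehrhart polynomial satisfies ${\rm Re}(\alpha) \le D - 1 = d - 2$. It thus suffices to find one odd $d$ whose Ehrhart polynomial has a root with ${\rm Re}(\alpha) > d-2$. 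I will aim for the sharper conclusion ${\rm Re}(\alpha) > d - 1 = \dim$ promised in the abstract, which simultaneously contradicts the second conjecture, and the assertion will be that $d = 127$ already does this.

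First I would write the Ehrhart polynomial down in closed form. Theorem \ref{Ehrhart} gives $E_d(m) = \sum_{i=0}^{d-1} r_d(i)\binom{m}{i}$, and Lemma \ref{standard} evaluates the coefficients as $r_d(i) = \binom{d}{i}\sum_{\ell=1}^{d-i}\binom{i}{k-\ell}$, with $r_d(i) = \binom{d}{i}2^{i}$ for $i \le k-1$; setting $d = 127$, $k = 64$ produces a degree-$126$ polynomial with explicit integer coefficients. Equivalently one may build it from the $h$-vector of Theorem \ref{hvector} through the Ehrhart series $\bigl(\sum_j h_j^{(d)} t^j\bigr)/(1-t)^{d}$, which has the advantage of encoding the Gorenstein symmetry $h_j^{(d)} = h_{d-1-j}^{(d)}$ automatically. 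That symmetry forces the roots to lie symmetrically about the line ${\rm Re}(z) = -1/2$, so any root of large positive real part is paired with one of large negative real part; this is a useful internal consistency check, though it does not by itself exhibit the extreme root.

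The decisive step is to certify that the maximal real part among the roots of $E_{127}$ exceeds $126$. My plan is to carry this out exactly rather than by floating-point root-finding alone: form $E_{127}$ over $\mathbb{Q}$ from the formula above, then translate by $w = z - 126$ and apply the Routh--Hurwitz criterion to the real polynomial $E_{127}(w + 126)$. The conjecture would demand that this shifted polynomial be Hurwitz stable (all roots in the open left half-plane); detecting, from the exact Routh array over $\mathbb{Q}$, that it is \emph{not} stable exhibits a root with ${\rm Re}(w) \ge 0$, i.e.\ ${\rm Re}(z) \ge 126$, and a separate check that no root lies on the line ${\rm Re}(z) = 126$ upgrades this to a strict inequality. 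An equivalent certified route is to enclose a single root in a small box strictly to the right of ${\rm Re}(z) = 126$ by interval arithmetic, or to run the argument principle on the boundary of such a box with rigorously bounded contour data.

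I expect the certification to be the genuine obstacle. Because the two competing conjectures place the threshold right at $\dim = d-1$ and the family only crosses it for $d$ as large as $127$, the offending root must sit only slightly to the right of the critical line, so distinguishing ${\rm Re}(\alpha) > 126$ from ${\rm Re}(\alpha) \le 126$ is a delicate numerical question on a degree-$126$ polynomial whose integer coefficients are astronomically large. Controlling round-off---whether through exact Hurwitz determinants over $\mathbb{Q}$ or through guaranteed interval enclosures---is where the real effort lies; by contrast the algebraic preparation is entirely routine once Lemma \ref{standard} and Theorems \ref{Ehrhart} and \ref{hvector} are in hand.
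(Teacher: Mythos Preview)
Your reading of the situation is correct: the statement is a conjecture and the paper's business is to refute it, not to prove it. Your proposed refutation is sound in principle and would succeed.

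Where you and the paper differ is in the level of rigour of the numerical certification. The paper does not attempt an exact verification: it simply feeds the explicit polynomial (built from Lemma~\ref{standard} and Theorem~\ref{Ehrhart}, exactly as you describe) into \texttt{Maple}, \texttt{Mathematica}, and \texttt{Maxima}, computes the roots in floating point, and reports the largest real part; the use of three independent packages serves as the consistency check. Your plan to shift by $w=z-126$ and run the Routh--Hurwitz test over $\mathbb{Q}$, or to enclose a root by certified interval arithmetic, is a genuinely stronger argument---it produces a proof rather than a high-confidence computation---but it is also substantially more work on a degree-$126$ polynomial with enormous coefficients, and the paper does not go that far. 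One further difference of presentation: the paper records three thresholds ($d=35$ already violates Conjecture~\ref{conj:five}, $d=125$ already violates Conjecture~\ref{conj:dstrip}, and $d=127$ pushes the real part past the dimension itself), whereas you jump straight to $d=127$; either suffices for the refutation, but the intermediate cases are informative about where each bound first fails.
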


\begin{Conjecture}[\cite{five}]
\label{conj:five}
{\em
All roots $\alpha $ of Ehrhart polynomials of $D$ dimensional Gorenstein Fano polytopes
satisfy $-D/2 \le {\rm Re} (\alpha )  \le D/2 - 1$.
}
\end{Conjecture}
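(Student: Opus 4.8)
The plan is to establish the paper's main theorem, namely that the Ehrhart polynomial of ${\rm Conv}(A_{127})$ has a complex root $\alpha$ with ${\rm Re}(\alpha) > 126 = \dim {\rm Conv}(A_{127})$, which at once contradicts Conjecture \ref{conj:dstrip} (whose upper bound is $D-1 = 125$) and, a fortiori, Conjecture \ref{conj:five}. The first step is to assemble the polynomial in a form adapted to root analysis. By Theorem \ref{Ehrhart} and Lemma \ref{standard} the coefficients $r_d(i)$ are explicit, so $L_d(m) = \sum_{i=0}^{d-1} r_d(i)\binom{m}{i}$ is a completely determined polynomial of degree $d-1$; equivalently, Theorem \ref{hvector} supplies the $h$-vector, from which $L_d(m) = \sum_{j=0}^{d-1} h_j^{(d)} \binom{m + d - 1 - j}{d-1}$. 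For $d = 127$ (so $k = 64$) all of these coefficients are exact integers and can be produced without any rounding.

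The second step is to exploit the Gorenstein symmetry to cut the degree in half. Since the roots are symmetric about ${\rm Re}(z) = -1/2$, the root set in the shifted variable $w = z + \tfrac12$ is invariant under $w \mapsto -w$; as the degree $126$ is even, the shifted polynomial $Q(w) := L_{127}(w - \tfrac12)$ is even, so $Q(w) = R(w^2)$ for a polynomial $R$ of degree $63$. A root $\alpha$ with ${\rm Re}(\alpha) > 126$ corresponds to $w = \alpha + \tfrac12$ with ${\rm Re}(w) > 126.5$, hence to a root $\zeta = w^2$ of $R$ whose principal square root satisfies ${\rm Re}(\sqrt{\zeta}) > 126.5$. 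The whole problem thus reduces to locating a single root of the degree-$63$ polynomial $R$ inside the explicit region $\{\zeta : {\rm Re}(\sqrt{\zeta}) > 126.5\}$ (the domain to the right of a parabola in the $\zeta$-plane). Because $L_{127}(m) > 0$ for every nonnegative integer $m$, I expect the extreme root to be genuinely complex, occurring in a conjugate pair, so this complex target is the correct one rather than a real root.

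The decisive step is to certify that $R$ really has such a root, and I would do this by the argument principle rather than by unverified floating point. Choosing a small disk $D$ centered at a candidate $\zeta_0$ with ${\rm Re}(\sqrt{\zeta_0})$ safely above $126.5$ and lying inside the target region, I would show $\frac{1}{2\pi i}\oint_{\partial D} R'(\zeta)/R(\zeta)\,d\zeta = 1$, confirming exactly one root of $R$ in $D$ and hence an $\alpha$ with ${\rm Re}(\alpha) > 126$; equivalently one bounds the winding number by certifying the variation of $R$ along $\partial D$.

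I expect the main obstacle to be precisely this certification. No closed form for the roots is available, so the argument is a finite but delicate computation on a degree-$63$ polynomial whose integer coefficients are astronomically large (they involve $\binom{127}{i}2^{i}$ and the middle value $2^{126}$), and naive floating point is untrustworthy at this scale. To turn the existence claim into a genuine proof one must control the contour integral with exact rational arithmetic or verified interval bounds. A secondary point I would be careful to record is that the root clears $126$ and not merely $125 = D-1$, so the example refutes Conjecture \ref{conj:dstrip} with margin and overwhelmingly violates the tighter window of Conjecture \ref{conj:five}.
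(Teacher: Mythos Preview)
Your proposal is correct in aim and sound in method, but it takes a genuinely different route from the paper. The paper's treatment of this conjecture is purely computational: having the explicit Ehrhart polynomial from Theorem~\ref{Ehrhart} and Lemma~\ref{standard}, it simply feeds the polynomial to {\tt Maple}, {\tt Mathematica}, and {\tt Maxima}, reads off the numerical roots, and reports in a table that already for $d=35$ the largest real part ($\approx 16.357$) exceeds $D/2-1=16$, while $d=127$ pushes the real part past the dimension itself. No certification of the floating-point output is offered. Your plan, by contrast, halves the degree via the Gorenstein symmetry $w\mapsto -w$ and then proposes to trap a root rigorously by a winding-number computation in interval or exact arithmetic. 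What your approach buys is an actual proof rather than reliance on uncertified CAS root-finders; what the paper's approach buys is brevity, and the useful observation that for Conjecture~\ref{conj:five} alone you may work with $d=35$ (a degree-$34$ polynomial, reducing to degree $17$ after your symmetry trick) rather than $d=127$, which would make the certification you describe dramatically easier.
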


Using the software packages
{\tt Maple},
{\tt Mathematica},
and {\tt Maxima}, 
we computed
the largest real part of  roots of the Ehrhart polynomial of ${\rm Conv}(A_d)$:
\begin{table}[h]
\begin{center}
\begin{tabular}{|c|c|c|c|}
\hline
$d$ & $\dim ({\rm Conv}(A_d))$ & the largest real part & \\
\hline
35 & 34 & 16.35734046  &  a counterexample to
Conjecture \ref{conj:five}\\
\hline
125 & 124 & 123.5298262 & a counterexample to 
Conjecture \ref{conj:dstrip}
\\
\hline
127 & 126 & 126.5725840 &
greater than its dimension
\\
\hline
\end{tabular}
%
\end{center}
\end{table}

\begin{Remark}
{\rm
It was shown \cite{BrDe} that Conjecture \ref{conj:dstrip} is true for  $D \leq 5$.
Recently, a simplex (not a Fano polytope)
that does not satisfy
the condition ``${\rm Re} (\alpha )  \le D - 1$" in
Conjecture \ref{conj:dstrip} was presented in \cite{Higa2}.
Our polytope ${\rm Conv}(A_{125})$ is the first
example
satisfying neither 
``$-D \le {\rm Re} (\alpha ) $" 
nor
``${\rm Re} (\alpha )  \le D - 1$"
in
Conjecture \ref{conj:dstrip}.
}
\end{Remark}

\end{document}